\theoremstyle{plain}
\newtheorem{thm}{Theorem}[section]
\newtheorem{lem}[thm]{Lemma}
\newtheorem{con}[thm]{Conjecture}
\newtheorem{cons}[thm]{Consequence}
\title{Some consequences of the Firoozbakht's conjecture}
\author[{Luan Alberto Ferreira and Hugo Luiz Mariano}]{Luan Alberto Ferreira\\ \\ Hugo Luiz Mariano}
\address{IME - USP\hfill\break\indent Rua do Matão, 1010\hfill\break\indent  São Paulo - SP, 05508-090\hfill\break\indent  Brazil}
\email{luan@ime.usp.br \\ hugomar@ime.usp.br}
\date{February 2017}
\thanks{The first author were supported by CAPES-PROEX}
\keywords{Prime gaps, Firoozbakht's conjecture, Zhang's theorem}
\begin{document}
\selectlanguage{english} % ou \selectlanguage{brazil} +

\setcounter{page}{1} % iniciar numeração diferente da primeira

\maketitle
\begin{abstract}
In this paper we present the statement of the Firoozbakht's conjecture, some of its consequences if it is proved and we show a consequence of Zhang's theorem concerning the Firoozbakht's conjecture.

%\emph{
%Para artigos em português, não esqueça de trocar as definições antes do comando  $\backslash$\texttt{title}: $\backslash$\texttt{newtheorem}\{...\} .}
%\smallskip
%
%\textbf{Do not change page and font size.}
\end{abstract}

\newcommand{\fantasma}[3]{{#1}_{{#2}}^{\phantom{{#2}}{#3}}}

% there is really no need of this, unless your paper is really long

\tableofcontents

\section{Statement of the conjecture}

What makes a math problem a good one? For the authors of this note, the answer to this question involves three major ingredients:

\begin{itemize}
  \item It's easy to understand;
  \item It's very difficult to solve;
  \item It has several strong implications.
\end{itemize}

An example of a problem satisfying the three above conditions is the Firoozbakht's conjecture (see \cite{Wikipedia}). In 1982 (see \cite{site1}), the Iranian mathematician Farideh Firoozbakht, from the University of Isfahan, conjectured the following:

\vspace{3mm}

\noindent \textbf{Firoozbakht's conjecture:} \emph{Let $\{p_n\}_{n \in \mathbb{N}}$ the sequence of prime numbers. Then the sequence $\{\sqrt[n]{p_n}\}_{n \in \mathbb{N}}$ is strictly decreasing.}

\vspace{3mm}

Let's see the first cases:

\begin{center}
\begin{tabular}{ccc}
$n$ & $p_n$ & $\sqrt[n]{p_n}$ \\
& & \\
1 & 2 & 2.0000 \\
2 & 3 & 1.7321 \\
3 & 5 & 1.7100 \\
4 & 7 & 1.6266 \\
5 & 11 & 1.6154 \\
6 & 13 & 1.5334 \\
7 & 17 & 1.4989 \\
8 & 19 & 1.4449 \\
9 & 23 & 1.4168 \\
10 & 29 & 1.4004 \\
\end{tabular}
\end{center}

The Firoozbakht's conjecture has been verified for all primes below $4 \cdot 10^{18}$ (see \cite{K1}), but there is no consensus about the truth of this conjecture. The authors of this paper, in particular, believes it is true. The importance of the Firoozbakht's conjecture is that this affirmation is the boldest (and reasonable) known statement about prime gaps. The $n$-th prime gap $g_n$ is defined by the relation $$g_n = p_{n+1} - p_n, \ \forall \ n \in \mathbb{N}.$$

Utilizing this language, the Firoozbakht's conjecture can be written in following way:

\vspace{3mm}

\noindent \textbf{Firoozbakht's conjecture, prime gaps version:} $g_n \leq p_n(\sqrt[n]{p_n} - 1), \ \forall \ n \in \mathbb{N}$.

\vspace{3mm}

Is this paper, we will present a very strong consequence of the Firoozbakht's conjecture concerning prime gaps and explain how this can solve a lot of other problems about prime numbers. We also show that Zhang's theorem implies Firoozbakht's conjecture is true for infinitely many values of $n$.

\section{Relationship with prime gaps}

The next lemma will be used further.

\begin{lem}

$\displaystyle \frac{g_n}{g_n + p_n} < \ln(p_{n + 1}) - \ln(p_n), \ \forall \ n \in \mathbb{N}$.

\end{lem}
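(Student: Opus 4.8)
The plan is to reduce the claim to a single-variable calculus inequality via the defining relation $g_n = p_{n+1} - p_n$. The key observation is that $g_n + p_n = p_{n+1}$, so the left-hand side collapses to
\[
\frac{g_n}{g_n + p_n} = \frac{p_{n+1} - p_n}{p_{n+1}} = 1 - \frac{p_n}{p_{n+1}},
\]
while the right-hand side is simply $\ln(p_{n+1}) - \ln(p_n) = \ln\!\left(\frac{p_{n+1}}{p_n}\right)$.

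Next I would introduce the single variable $x = \dfrac{p_{n+1}}{p_n}$. Since consecutive primes satisfy $p_{n+1} > p_n$, we have $x > 1$. With this substitution the desired inequality becomes
\[
1 - \frac{1}{x} < \ln x \qquad (x > 1),
\]
which is a standard elementary estimate independent of any number theory. Thus the entire arithmetic content of the lemma is absorbed into the trivial algebraic identity $g_n + p_n = p_{n+1}$, and what remains is purely analytic.

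To settle the analytic inequality, I would define $f(x) = \ln x - 1 + \frac{1}{x}$ and compute that $f(1) = 0$ together with $f'(x) = \frac{1}{x} - \frac{1}{x^2} = \frac{x-1}{x^2}$. Since $f'(x) > 0$ for every $x > 1$, the function $f$ is strictly increasing on $(1, \infty)$, whence $f(x) > f(1) = 0$, i.e.\ $\ln x > 1 - \frac{1}{x}$ for all $x > 1$. Substituting back $x = p_{n+1}/p_n$ yields the lemma.

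I do not anticipate any genuine obstacle here: the result is essentially a restatement of the classical bound $\ln x > 1 - 1/x$. The only point requiring a moment's care is the \emph{strictness} of the inequality, which is guaranteed because $p_{n+1} > p_n$ holds strictly for consecutive primes, forcing $x > 1$ rather than merely $x \geq 1$; this is what promotes the non-strict bound into the strict one asserted in the statement.
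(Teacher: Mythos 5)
Your proposal is correct and follows essentially the same route as the paper: both reduce the lemma to the standard calculus inequality $\ln x > 1 - \tfrac{1}{x}$ (the paper phrases it as $1 + \ln x < x$ for $x \in (0,1)$ with $x = p_n/p_{n+1}$, which is the same bound under the substitution $x \mapsto 1/x$). The only difference is that you also supply the monotonicity proof of the calculus fact, which the paper simply cites as known.
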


\begin{proof} We know by calculus that $1 + \ln(x) < x, \ \forall \ x \in (0, 1)$. Taking $x = \displaystyle \frac{p_n}{p_{n + 1}}$, we obtain the result. \end{proof}

The major consequence of the Firoozbakht's conjecture is the following inequality:

\begin{thm} \label{firoozbakhtlnquadrado}

If the Firoozbakht's conjecture is true, then $$g_n < \ln^2(p_n) - \ln(p_n) - 1, \ \forall \ n \geq 10.$$ In particular, $$g_n < \ln^2(p_n) - \ln(p_n), \ \forall \ n \geq 5,$$ and $$\displaystyle \limsup_{n \rightarrow \infty} \displaystyle \frac{g_n}{\ln^2(p_n)} \leq 1.$$

\end{thm}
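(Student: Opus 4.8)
The plan is to convert the Firoozbakht's conjecture into a bound on the logarithmic gap and then feed it into the preceding Lemma. Writing the conjecture as $\sqrt[n]{p_n} > \sqrt[n+1]{p_{n+1}}$ and taking logarithms gives $\frac{\ln p_{n+1}}{n+1} < \frac{\ln p_n}{n}$, hence $\ln p_{n+1} - \ln p_n < \frac{\ln p_n}{n}$. Chaining this with the Lemma, which asserts $\frac{g_n}{g_n + p_n} < \ln p_{n+1} - \ln p_n$, I obtain $\frac{g_n}{g_n + p_n} < \frac{\ln p_n}{n}$. Solving for $g_n$ (the denominator $n - \ln p_n$ is positive for all $n \geq 10$, since $p_n$ grows only like $n\ln n$) yields the single clean inequality $g_n < \frac{p_n \ln p_n}{n - \ln p_n}$, which is the engine of the whole argument.

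Next I would reduce the desired estimate to a statement purely about the counting function. Setting $L = \ln p_n$, the target $\frac{p_n L}{n - L} \leq L^2 - L - 1$ is, after cross-multiplying, exactly equivalent to $n \geq L + \frac{p_n L}{L^2 - L - 1}$. Since $n = \pi(p_n)$, this is a request for a sharp explicit lower bound on $\pi$. Expanding $\frac{L}{L^2 - L - 1} = \frac{1}{L}(1 + \frac{1}{L} + \frac{2}{L^2} + \frac{3}{L^3} + \cdots)$ (the coefficients are Fibonacci numbers), I see that the classical bound $\pi(x) > \frac{x}{\ln x - 1}$ and even Dusart's three-term bound $\frac{x}{\ln x}(1 + \frac{1}{\ln x} + \frac{2}{\ln^2 x})$ fall just short; the decisive input is Dusart's four-term refinement $\pi(x) \geq \frac{x}{\ln x}(1 + \frac{1}{\ln x} + \frac{2}{\ln^2 x} + \frac{c}{\ln^3 x})$ with $c$ large enough (around $6.35$), valid for $x$ beyond an explicit threshold. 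For such $x$ the surplus in the $\frac{x}{L^4}$ coefficient dominates both the Fibonacci tail and the stray additive $L$, so $n \geq L + \frac{p_n L}{L^2 - L - 1}$ holds and the main estimate follows.

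Finally I would close the gap between "large $x$" and "$n \geq 10$" by a direct finite verification. Because $L^2 - L - 1$ is increasing in $p_n$ while gaps are typically tiny, it suffices to test $g_n < L^2 - L - 1$ only at the record (maximal) prime gaps below the Dusart threshold, all of which lie well within the range in which the Firoozbakht's conjecture has been checked. The two remaining assertions are then immediate: the bound $g_n < \ln^2 p_n - \ln p_n$ for $n \geq 5$ follows from the stronger inequality for $n \geq 10$ together with a check of the cases $n = 5, \ldots, 9$, and dividing $g_n < \ln^2 p_n - \ln p_n - 1$ by $\ln^2 p_n$ and letting $n \to \infty$ gives $\limsup_n \frac{g_n}{\ln^2 p_n} \leq 1$. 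The main obstacle is the second step: pinning down the constant term $-1$, rather than merely the leading $\ln^2 p_n$, forces one to use the sharpest available explicit form of the prime number theorem and to make the passage to the finite range fully effective.
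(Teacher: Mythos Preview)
Your argument is essentially the same as the paper's. Both proofs combine Firoozbakht with the preceding Lemma to obtain $n<\ln p_n+\dfrac{p_n\ln p_n}{g_n}$ (equivalently, your $g_n<\dfrac{p_n\ln p_n}{n-\ln p_n}$), then feed in a sharp explicit lower bound for $\pi(x)$ to force $g_n<\ln^2 p_n-\ln p_n-1$ for all $n$ beyond an explicit threshold, and finish with a finite computer check; the only cosmetic difference is that the paper quotes Axler's bound $\pi(x)>\dfrac{x}{\ln x-1-\frac{1}{\ln x}-\frac{1}{\ln^2 x}}$ (for $x\ge 1772201$), whose expansion $\tfrac{x}{\ln x}\bigl(1+\tfrac{1}{\ln x}+\tfrac{2}{\ln^2 x}+\tfrac{4}{\ln^3 x}+\cdots\bigr)$ already beats the Fibonacci coefficient $3$ you identified, whereas you invoke a Dusart-style four-term bound to the same effect.
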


\begin{proof} (Following \cite{K2}) Suppose that $$\sqrt[n]{p_n} > \sqrt[n+1]{p_{n+1}}, \ \forall \ n \in \mathbb{N}.$$ Taking the natural logarithm, we obtain $$\displaystyle \frac{\ln(p_n)}{n} > \displaystyle \frac{\ln(p_{n+1})}{n+1}, \ \forall \ n \in \mathbb{N}.$$ Isolating $n$, we obtain $$n < \displaystyle \frac{\ln(p_n)}{\ln(p_{n + 1}) - \ln(p_n)}, \ \forall \ n \in \mathbb{N}.$$ Utilizing the previous lemma, we have $$n < \ln(p_n) + \displaystyle \frac{\ln(p_n) \cdot p_n}{g_n}, \ \forall \ n \in \mathbb{N}.$$ On the other hand, a calculation shows that $$\displaystyle \frac{x+\ln^2(x)}{\ln(x)-1-\frac{1}{\ln(x)}} < \displaystyle \frac{x}{\ln(x)-1-\frac{1}{\ln(x)}-\frac{1}{\ln^2(x)}}, \ \forall \ x \geq 285967.$$ If $\pi(x)$ denotes the prime-counting function, then \cite[Corollary 3.6]{A} states that $$\displaystyle \frac{x}{\ln(x)-1-\frac{1}{\ln(x)}-\frac{1}{\ln^2(x)}} < \pi(x), \ \forall \ x \geq 1772201.$$ Then $$\displaystyle \frac{x + \ln^2(x)}{\ln(x) - 1 - \frac{1}{\ln(x)}} < \pi(x), \ \forall \ x \ge 1772201.$$ Taking $x = p_n$ and remembering that $1772201 = p_{133115}$, we get $$\displaystyle \frac{p_n + \ln^2(p_n)}{\ln(p_n) - 1 - \frac{1}{\ln(p_n)}} < \pi(p_n) = n < \ln(p_n) + \displaystyle \frac{\ln(p_n) \cdot p_n}{g_n}, \ \forall \ n \ge 133115.$$ Isolating $g_n$, we get $$g_n < \displaystyle \frac{p_n}{p_n + \ln(p_n) + 1} \cdot [\ln^2(p_n) - \ln(p_n) - 1], \ \forall \ n \geq 133115.$$ But $$\displaystyle \frac{p_n}{p_n + \ln(p_n) + 1} < 1, \ \forall \ n \in \mathbb{N}.$$ Then $$g_n < \ln^2(p_n) - \ln(p_n) - 1, \ \forall \ n \geq 133115.$$ By the help of a computer for checking the remaining cases ($10 \leq  n \leq 133114$),  we finish the proof. \end{proof}

The combination of the previous theorem with the following result provides another tight relation between prime gaps and Firoozbakht's conjecture.

\begin{thm}

If $g_n < \ln^2(p_n) - \ln(p_n) - 1,17, \ \forall \ n \geq 10$, then the Firoozbakht's conjecture is true.

\end{thm}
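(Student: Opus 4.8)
The plan is to run the argument of Theorem~\ref{firoozbakhtlnquadrado} in reverse: rather than deriving a prime-gap bound from Firoozbakht, I would derive Firoozbakht from the prime-gap bound. Recall that the conjecture is equivalent to $\sqrt[n]{p_n} > \sqrt[n+1]{p_{n+1}}$ for all $n$, which after taking logarithms and isolating $n$ reads
\[ n < \frac{\ln(p_n)}{\ln(p_{n+1}) - \ln(p_n)}, \quad \forall \ n \in \mathbb{N}. \]
So it suffices to establish this inequality for every $n$, and the whole strategy is to produce a good enough lower bound for the right-hand side out of the hypothesis on $g_n$.

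The first step replaces the lemma (which gave a \emph{lower} bound for $\ln(p_{n+1}) - \ln(p_n)$) by the elementary \emph{upper} bound $\ln(1+t) < t$ with $t = g_n/p_n$, so that $\ln(p_{n+1}) - \ln(p_n) = \ln(1 + g_n/p_n) < g_n/p_n$. This gives
\[ \frac{\ln(p_n)}{\ln(p_{n+1}) - \ln(p_n)} > \frac{p_n \ln(p_n)}{g_n}, \]
and since this inequality is strict it is enough to show $g_n \le \frac{p_n \ln(p_n)}{n}$. Inserting the hypothesis $g_n < \ln^2(p_n) - \ln(p_n) - 1.17$ and clearing denominators (using $\ln(p_n)>0$), this reduces to
\[ n \le \frac{p_n}{\ln(p_n) - 1 - \frac{1.17}{\ln(p_n)}}, \]
valid for all large $n$ where the denominator is positive.

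The second step supplies this last inequality from an explicit upper bound for the prime-counting function. Writing $n = \pi(p_n)$, I would invoke the upper estimate from \cite{A} that accompanies Corollary~3.6, of the shape $\pi(x) < \frac{x}{\ln(x) - 1 - \frac{1}{\ln(x)} - \frac{c}{\ln^2(x)}}$ beyond some explicit threshold. The constant $1.17 = 1 + 0.17$ is chosen precisely so that $\frac{c}{\ln^2(x)} \le \frac{0.17}{\ln(x)}$, i.e.\ $\ln(x) \ge c/0.17$; under this condition the denominator of the cited bound is at least $\ln(x) - 1 - \frac{1.17}{\ln(x)}$, and hence $\pi(x) \le \frac{x}{\ln(x) - 1 - 1.17/\ln(x)}$ for all sufficiently large $x$. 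Taking $x = p_n$ closes the analytic part for $n$ large.

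Finally, the remaining finite range, namely those $n \ge 10$ lying below the threshold where the asymptotic estimate takes over (the very first values already appear decreasing in the table of Section~1), is handled by direct computation, exactly as in the proof of the previous theorem. The main obstacle I anticipate is quantitative rather than conceptual: pinning down an admissible constant $c$ and threshold in the upper bound for $\pi$ so that the margin $0.17$ genuinely absorbs the $1/\ln^2(x)$ term, and then confirming that the resulting finite verification range is small enough to be checked in practice.
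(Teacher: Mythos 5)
Your sketch is correct, but note first that the paper does not actually prove this theorem: it refers the reader to \cite{K2}, so there is no internal argument to compare yours against. What you have written is essentially a reconstruction of the proof in that reference, obtained exactly as you describe, by running the proof of Theorem \ref{firoozbakhtlnquadrado} backwards with the two one-sided estimates interchanged. Every reduction checks out: Firoozbakht's conjecture is equivalent to $n < \ln(p_n)/(\ln(p_{n+1})-\ln(p_n))$ for every $n$; the paper's lemma (a \emph{lower} bound for $\ln(p_{n+1})-\ln(p_n)$, which is what the forward implication needs) must indeed be replaced by the complementary upper bound $\ln(1+g_n/p_n)<g_n/p_n$; strictness of that bound lets you aim for the non-strict $n \le p_n\ln(p_n)/g_n$; and inserting the hypothesis reduces everything to $\pi(p_n) \le p_n/\left(\ln(p_n)-1-1.17/\ln(p_n)\right)$, with the harmless proviso that $\ln^2(p_n)-\ln(p_n)-1.17>0$ for $n\ge 10$. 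Your reading of the constant is also the right one: $1.17=1+0.17$, with the margin $0.17/\ln(x)$ designed to absorb the $c/\ln^2(x)$ term of an upper estimate for $\pi(x)$. The only point you leave open --- whether \cite{A} supplies such an upper bound --- is fine: it does, with $c=3.35$ (plus lower-order terms), valid beyond an explicit threshold of order $10^9$, so your absorption condition $\ln(x)\ge c/0.17\approx 19.7$ takes over near $x\approx 3.6\cdot 10^{8}$, and the finite range that remains lies far below $4\cdot 10^{18}$, where Firoozbakht's inequality has already been verified computationally (\cite{K1}, quoted in Section 1). So the concluding computational step is not merely feasible; it is effectively already in the literature, and your proposal closes.
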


For a proof of this result we refer the reader to \cite{K2}. Now we will see how theorem \ref{firoozbakhtlnquadrado} can be used to solve long standing classical problems concerning prime numbers.

\section{Other consequences}

The best upper bound on prime gaps currently known is due to Baker, Harman and Pintz:

\begin{thm}[Baker-Harman-Pintz]

$g_n \le p_n^{0.525}, \ \forall \ n \gg 0$.

\end{thm}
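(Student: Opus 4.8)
The plan is to reduce this statement about prime gaps to a short-interval prime-counting problem: it suffices to show that for every sufficiently large $x$ the interval $(x, x + x^{0.525}]$ contains at least one prime. Indeed, if this holds for all $x$ beyond some threshold, then any prime $p_n$ past that threshold satisfies $p_{n+1} \le p_n + p_n^{0.525}$, which is exactly $g_n \le p_n^{0.525}$. So I would fix $\theta = 0.525$, set $y = x^{\theta}$, and aim to produce a \emph{positive lower bound} for the number of primes in $(x, x+y]$.

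The classical starting point is the explicit formula. Writing $\psi$ for the Chebyshev function, Perron's formula together with the Riemann--von Mangoldt explicit formula gives, in essence, $\psi(x+y) - \psi(x) = y - \sum_{\rho} \frac{(x+y)^{\rho} - x^{\rho}}{\rho} + (\text{small})$, where $\rho = \beta + i\gamma$ ranges over the nontrivial zeros of $\zeta$. The whole game is to show that the zero-sum is $o(y)$, so that the main term $y$ survives and the interval genuinely contains $\sim y/\log x$ primes. Controlling this sum requires \emph{zero-density estimates}, that is, upper bounds for $N(\sigma,T)$, the number of zeros with $\beta \ge \sigma$ and $|\gamma| \le T$; the sharper these bounds, the smaller the admissible exponent $\theta$. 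Huxley's density theorem $N(\sigma,T) \ll T^{(12/5)(1-\sigma)+\epsilon}$ already yields primes in intervals of length $x^{7/12+\epsilon}$, but $7/12 = 0.5833\ldots$ exceeds $0.525$, so this direct approach falls short.

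To push the exponent below $7/12$ down to $0.525$, I would abandon the direct prime count in favour of a \emph{sieve} argument in the spirit of Harman. Using a combinatorial identity (Heath--Brown's or Vaughan's), I would decompose the characteristic function of the primes in $(x, x+y]$ into bilinear pieces — \emph{Type I} sums $\sum_m \sum_n a_m$ and \emph{Type II} sums $\sum_m \sum_n a_m b_n$ — whose ranges of summation are under control. Iterating Buchstab's identity then expresses the prime count as an alternating sum of such pieces; crucially, one only needs an \emph{asymptotic} evaluation for those pieces where the available arithmetic information — mean-value theorems and large-value estimates for the relevant Dirichlet polynomials, such as those of Watt and of Jutila — is strong enough, while the remaining pieces are discarded after verifying their sign. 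This yields a genuine lower bound rather than an asymptotic.

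The hard part is precisely this last step: securing arithmetic input sharp enough to evaluate the dominant bilinear sums, and then carrying out the delicate numerical optimization of the Buchstab decomposition so that the discarded (unevaluated) portion is provably smaller than the surviving main term at the specific value $\theta = 0.525$. This optimization is where the constant $0.525$ is actually extracted, and it hinges on the strongest known fourth-moment and large-value bounds for Dirichlet polynomials; any weakening of those inputs forces a larger exponent. For this reason I would treat the statement as a deep analytic theorem and, as the authors do, defer its full execution to Baker--Harman--Pintz.
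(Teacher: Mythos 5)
Your proposal is consistent with the paper's treatment: the paper offers no proof of this theorem at all, but simply refers the reader to \cite{BHP}, exactly as you do after your outline. Your sketch of the underlying method (reduction to primes in short intervals, the inadequacy of zero-density/explicit-formula arguments at exponent $7/12$, and the Harman-style sieve with Buchstab iteration and Dirichlet-polynomial input such as Watt's theorem) is a faithful summary of how Baker--Harman--Pintz actually proceed, so deferring the execution to their paper is the right call.
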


It is easy to see that Firoozbakht's conjecture improves the Baker-Harman-Pintz's bound significantly. For a proof of the Baker-Harman-Pintz's theorem we refer the reader to \cite{BHP}.

For the next consequence we will use the following lemma.

\begin{lem}

Is Firoozbakht's conjecture is true, then $g_n < \sqrt{n}, \ \forall \ n \geq 3645$.

\end{lem}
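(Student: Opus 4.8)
The plan is to reduce the claim to an elementary comparison between $\ln^2(p_n)$ and $\sqrt{n}$. Assuming Firoozbakht's conjecture, Theorem \ref{firoozbakhtlnquadrado} already gives $g_n < \ln^2(p_n) - \ln(p_n) - 1 < \ln^2(p_n)$ for every $n \geq 10$, so it suffices to show that $\ln^2(p_n) \leq \sqrt{n}$ — equivalently $\ln(p_n) \leq n^{1/4}$ — from some point on, and then to handle the finitely many remaining $n$ by direct computation. The only extra ingredient needed is an explicit \emph{upper} bound for the $n$-th prime, and I would use the standard estimate $p_n < n(\ln n + \ln\ln n)$, valid for $n \geq 6$.

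With that bound in hand, I would pass to logarithms to obtain $\ln(p_n) < \ln n + \ln(\ln n + \ln\ln n)$, so the desired inequality follows once $\ln n + \ln(\ln n + \ln\ln n) < n^{1/4}$. The left-hand side grows like $\ln n$ while the right-hand side grows like $n^{1/4}$, so this holds for all $n \geq N_0$ for some explicit threshold $N_0$; I would verify the crossover numerically and then confirm that the inequality persists for all larger $n$ by a monotonicity argument (comparing derivatives of $n^{1/4}$ and $\ln n + \ln(\ln n + \ln\ln n)$, the former dominating for $n \geq N_0$). Chaining this with the two displayed inequalities yields $g_n < \ln^2(p_n) < \sqrt{n}$ for all $n \geq N_0$.

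The main obstacle is that the analytic crossover occurs rather late: a quick estimate shows $\ln^2(p_n) < \sqrt{n}$ only starts to hold around $n \approx 2.6 \times 10^4$, which is well above the stated threshold $3645$. Thus the crude bound $g_n < \ln^2(p_n)$ does \emph{not} settle the lower portion $3645 \leq n < N_0$ — indeed at $n = 3645$ it only gives $g_n \lesssim 111$, far weaker than $\sqrt{3645} \approx 60.4$. Consequently the substance of the proof in that range is an unconditional finite verification: one computes the actual gaps $g_n$ and checks $g_n < \sqrt{n}$ for every $3645 \leq n < N_0$, relying on the fact that genuine prime gaps in this range are much smaller than $\ln^2(p_n)$. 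The value $3645$ is exactly the last index at which an actual gap reaches $\sqrt{n}$, so the bound is sharp in that sense and the computation cannot begin any earlier. The two delicate points are therefore choosing $N_0$ safely above the crossover (to leave margin in the rigorous estimate near the boundary) and carrying out the finite check; everything else is routine.
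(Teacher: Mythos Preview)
Your proposal is correct and follows essentially the same route as the paper: bound $g_n$ via Theorem~\ref{firoozbakhtlnquadrado}, bound $p_n$ above in terms of $n$, show the resulting expression falls below $\sqrt{n}$ for all large $n$, and finish with a direct computer verification of $g_n < \sqrt{n}$ on the remaining range down to $n = 3645$. The only difference is cosmetic: the paper uses the cruder estimate $p_n < n^2$ (so the analytic crossover is pushed out to $n = 411781$) in place of your Dusart-type bound, trading a longer finite check for a simpler inequality to analyze.
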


\begin{proof} A simple consequence of Prime Number Theorem is that $$p_n < n^2, \ \forall \ n \geq 2.$$ So, if Firoozbakht's conjecture is true, then $$g_n < \ln^2(p_n) - \ln(p_n) - 1 < \ln^2(n^2) - \ln(n^2) - 1, \ \forall \ n \geq 10.$$ By calculus, we know that $$\ln^2(n^2) - \ln(n^2) -1 \leq \sqrt{n}, \ \forall \ n \geq 411781.$$ So $$g_n < \sqrt{n}, \ \forall \ n \geq 411781.$$  Verifying the remaining cases ($3645 \leq n \leq 411780$) by a help of a computer, we finish the proof. \end{proof}

\begin{cons} [Sierpinski]

Let $n$ be an integer greater then $1$. If you write the numbers $1, 2, \ldots, n^2$ in a matrix in the following way $$\begin{array}{cccc} 1 & 2 & \ldots & n \\ n+1 & n+2 & \ldots & 2n \\ 2n+2 & 2n+2 & \ldots & 3n \\ \ldots & \ldots & \ldots & \ldots \\ (n-1)n+1 & (n-1)n+2 & \ldots & n^2\end{array}$$ then each row contains, at least, a prime number. Moreover, for each positive integer $k$ exists another integer $n_0(k)$ such that if $n \geq n_0(k)$, then each row contains, at least, $k$ prime numbers.

\end{cons}

\begin{proof} Let $n$ be an integer, $n \geq 34123$. As $34123 = p_{3645}$ is in the first row, suppose by absurd in the $k$-th row, $2 \leq k \leq n$, there is no prime number. So $(k-1)n+1, (k-1)n+2, \ldots, kn$ are all composite numbers. Let $p_m$ be the largest prime less then $(k-1)n+1$. Then $g_m \geq n$. But $m < p_m \leq n^2$, which implies $\sqrt{m} < n$. Also, as $p_m \geq p_{3645}$, then $m \geq 3645$. By the previous lemma, $g_m < \sqrt{m} < n$, absurd. By checking the remaining cases ($2 \leq n \leq 34122$) by a help of a computer, we finish the proof of the first statement.

The second statement follows the same argument, since if the Firoozbakht's conjecture is true, then given $\varepsilon > 0$ we have $g_n <n^\varepsilon$, for every $n$ sufficiently large. We leave the details to the reader. \end{proof}

This conjecture appears for the first time in \cite{SS}. Other consequence that is easily obtained by the Firoozbakht's conjecture is Andrica's conjecture (see \cite{Andrica}):

\begin{cons} [Andrica]

$g_n < 2\sqrt{p_n} + 1, \ \forall \ n \in \mathbb{N}$.

\end{cons}

\begin{proof} By calculus, we know that $$\ln^2(x) - \ln(x) - 1 < 2\sqrt{x} + 1, \ \forall \ x \geq 1.$$ So, if the Firoozbakht's conjecture is true, then $$g_n < \ln^2(p_n) - \ln(p_n) - 1 < 2\sqrt{p_n} + 1, \ \forall \ n \geq 10.$$ Verifying the remaining cases ($1 \leq n \leq 9$), we finish the proof. \end{proof}

Another beautiful consequence of the Firoozbakht's conjecture is the Oppermann's conjecture (see \cite{Oppermann}):

\begin{cons} [Oppermann]

$\pi(n^2 - n) < \pi(n^2) < \pi(n^2+n), \ \forall \ n \in \mathbb{N} - \{1\}$.

\end{cons}

\begin{proof} Oppermann's conjecture is true for $n \in \{2, \ldots, 74\}$. Now, suppose that exists $n \in \mathbb{N}$, $n \geq 75$, such that $\pi(n^2 - n) = \pi(n^2)$, and let $m = \pi(n^2 - n)$. Then $$p_m < n^2 - n < n^2 < p_{m + 1}.$$ Therefore $$g_m = p_{m + 1} - p_m > n,$$ which implies $$n < g_m < \ln^2(p_m) - \ln(p_m) - 1 < \ln^2(p_m) < \ln^2(n^2 - n).$$ But, by calculus, $$x > \ln^2(x^2 + x), \ \forall \ x \geq 75.$$ This is an absurd. So $\pi(n^2 - n) < \pi(n^2)$. A similar argument for the other inequality finishes the proof. \end{proof}

A direct consequence of assuming Oppermann's conjecture is (see \cite{Legendre})

\begin{cons} [Legendre, strong form]

There are at least two prime numbers between two consecutive squares.

\end{cons}

This, in turn (as $p_{n+1}  \geq p_n + 2, \ \forall \ n \in \mathbb{N}-\{1\}$), implies (see \cite{Brocard})

\begin{cons} [Brocard]

There are at least four prime numbers between $p_n^2$ and $p_{n + 1}^2, \ \forall \ n \in \mathbb{N} - \{1\}$.

\end{cons}

To finish this section, we invite the reader to prove another consequence of the Firoozbakht's conjecture:

\begin{cons} [Legendre's conjecture for cubes, strong form]

There are at least four prime numbers between two consecutive cubes.

\end{cons}

\begin{proof} Left to the reader. \end{proof}

\section{Related conjectures}

Recently, three other conjectures were made related to the Firoozbakht's conjecture. For the first two we refer the reader to \cite{OEIS}. The third conjecture was communicated directly to the first author by a friendly e-mail.

\begin{con} [Nicholson, 2013]

$\left(\displaystyle \frac{p_{n + 1}}{p_n}\right)^n < n \log(n), \ \forall \ n \geq 5$.

\end{con}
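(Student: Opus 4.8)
The plan is to translate Nicholson's multiplicative inequality into a statement about prime gaps and then attack it with Theorem \ref{firoozbakhtlnquadrado}. Writing $p_{n+1}/p_n = 1 + g_n/p_n$ and taking logarithms, Nicholson's conjecture is equivalent to $$n \ln\left(1 + \frac{g_n}{p_n}\right) < \ln(n \ln n) = \ln(n) + \ln(\ln(n)).$$ Since $\ln(1+x) < x$ for $x > 0$, it suffices to establish the cleaner sufficient condition $$\frac{n \, g_n}{p_n} \le \ln(n) + \ln(\ln(n)),$$ the slack introduced by this step being of order $n (g_n/p_n)^2 \to 0$ and therefore harmless. It is worth noting at the outset that, because $p_n > n \ln(n)$ for every $n$, Nicholson's bound $n \ln n$ refines the Firoozbakht bound $(p_{n+1}/p_n)^n < p_n$; in particular Nicholson's conjecture implies Firoozbakht's, so we should not expect the former to drop out of the latter for free.

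First I would bound the gap by Firoozbakht's consequence. Assuming Firoozbakht's conjecture, Theorem \ref{firoozbakhtlnquadrado} gives $g_n < \ln^2(p_n) - \ln(p_n) - 1$ for $n \ge 10$, so the sufficient condition above would follow from $$\frac{n \left[\ln^2(p_n) - \ln(p_n) - 1\right]}{p_n} \le \ln(n) + \ln(\ln(n)).$$ Next I would insert explicit Prime Number Theorem estimates of Rosser--Schoenfeld/Dusart type, namely $p_n > n(\ln(n) + \ln(\ln(n)) - 1)$ for $n \ge 2$ together with a matching upper estimate for $\ln(p_n)$, so as to reduce the whole inequality to a single-variable inequality in $L = \ln(n)$, to be checked for $L$ large by calculus and for the finitely many remaining $n$ by computer, exactly as in the previous proofs of this paper.

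The hard part will be this last inequality, which turns out to be genuinely tight rather than routine. Setting $L = \ln(n)$ and $\ell = \ln(\ln(n))$ and expanding, the leading terms $L + \ell$ of the two sides cancel, and the difference, left--hand minus right--hand side, is asymptotically $$\frac{2\ln(\ln(n))}{\ln(n)},$$ which is strictly positive for every large $n$. In other words, the estimate supplied by Theorem \ref{firoozbakhtlnquadrado} overshoots what Nicholson requires by a positive, though vanishing, amount, and this is precisely the quantitative manifestation of Nicholson's conjecture being strictly stronger than Firoozbakht's. Consequently the argument cannot be closed using Theorem \ref{firoozbakhtlnquadrado} alone: one would need a prime-gap bound marginally sharper than the one Firoozbakht yields, of the shape $g_n \le \frac{p_n}{n}\ln(n \ln n)$ (equivalently, of order $\ln(p_n)\ln(n)$ with the correct constant). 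Producing such a refinement, either unconditionally or from a suitable strengthening of the Firoozbakht hypothesis, is the real obstacle, and it is the reason this statement is recorded here as an open conjecture rather than as a consequence.
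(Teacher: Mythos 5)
The statement you were asked about is not a theorem of the paper: it is recorded there as an open conjecture (Nicholson, 2013), and the paper gives no proof of it. Your conclusion --- that the inequality cannot be extracted from Theorem \ref{firoozbakhtlnquadrado} and must be left as a conjecture --- is therefore exactly the right outcome, and it is to your credit that you did not manufacture a spurious proof. Moreover, your analysis reproduces, in the correct direction, the only thing the paper does prove about this statement: since $n\ln(n) < p_n$ (Rosser's theorem), Nicholson's bound $(p_{n+1}/p_n)^n < n\ln(n)$ is stronger than Firoozbakht's equivalent form $(p_{n+1}/p_n)^n < p_n$, which is precisely the link Nicholson $\Rightarrow$ Firoozbakht in the paper's implication chain Farhadian $\Rightarrow$ Nicholson $\Rightarrow$ Firoozbakht $\Rightarrow$ Forgues. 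One quantitative quibble: in your computation of the deficit, carrying the Dusart-type expansions $p_n/n = \ln(n)+\ln(\ln(n))-1+(\ln(\ln(n))-2)/\ln(n)+\cdots$ and $\ln(p_n)=\ln(n)+\ln(\ln(n))+(\ln(\ln(n))-1)/\ln(n)+\cdots$ consistently through both numerator and denominator yields an overshoot of leading order $(\ln(\ln(n))-1)/\ln(n)$ rather than $2\ln(\ln(n))/\ln(n)$; the sign and the order of magnitude, which are all your argument needs, are unaffected. In short: your treatment is consistent with the paper, which likewise regards this statement as strictly stronger than Firoozbakht's conjecture and leaves it open.
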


\begin{con} [Forgues, 2014]

$\left(\displaystyle \frac{\log(p_{n + 1})}{\log(p_n)}\right)^n < e, \ \forall \ n \in \mathbb{N}$.

\end{con}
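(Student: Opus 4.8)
The plan is to obtain Forgues' inequality as a direct consequence of the Firoozbakht's conjecture, which I assume throughout (here $\log$ denotes the natural logarithm, as the presence of $e$ on the right-hand side requires). First I would rewrite the conjecture in the logarithmic form already used in the proof of Theorem \ref{firoozbakhtlnquadrado}: the hypothesis $\sqrt[n]{p_n} > \sqrt[n+1]{p_{n+1}}$ is equivalent to
$$\frac{\log(p_n)}{n} > \frac{\log(p_{n+1})}{n+1}, \ \forall \ n \in \mathbb{N}.$$

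Next I would isolate the ratio of logarithms. Since $p_{n+1} > p_n > 1$, both logarithms are positive, so cross-multiplying and dividing gives
$$\frac{\log(p_{n+1})}{\log(p_n)} < \frac{n+1}{n} = 1 + \frac{1}{n}, \ \forall \ n \in \mathbb{N}.$$
As both sides are positive, raising to the $n$-th power is monotone and yields
$$\left(\frac{\log(p_{n+1})}{\log(p_n)}\right)^n < \left(1 + \frac{1}{n}\right)^n, \ \forall \ n \in \mathbb{N}.$$
The last step is the elementary estimate $\left(1 + \frac{1}{n}\right)^n < e$, valid for every $n \in \mathbb{N}$, which follows at once from $\log(1 + x) < x$ (for $x > 0$) by setting $x = 1/n$. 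Chaining the two displays finishes the argument, and, pleasantly, no finite range of $n$ needs to be checked separately: the inequalities hold uniformly from $n = 1$.

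I do not anticipate any real obstacle, because the entire point is that Firoozbakht's bound $\frac{\log(p_{n+1})}{\log(p_n)} < 1 + \frac{1}{n}$ is strictly sharper than Forgues' bound $\frac{\log(p_{n+1})}{\log(p_n)} < e^{1/n}$, the gap being exactly the classical inequality $1 + \frac{1}{n} < e^{1/n}$. The only thing to keep in mind is that this establishes Forgues' conjecture only conditionally on Firoozbakht's: it exhibits it as a genuine weakening of the latter, not as an unconditional theorem.
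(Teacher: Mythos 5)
Your proposal is correct and follows essentially the same route as the paper: the paper also deduces Forgues' inequality from Firoozbakht's conjecture by noting that $p_{n+1}^n < p_n^{n+1}$ gives $\frac{\log(p_{n+1})}{\log(p_n)} < 1 + \frac{1}{n}$, then raising to the $n$-th power and applying $\left(1+\frac{1}{n}\right)^n < e$. Your observation that the result is conditional on Firoozbakht's conjecture matches the paper's framing, where this implication appears as the final link in the chain Farhadian $\Rightarrow$ Nicholson $\Rightarrow$ Firoozbakht $\Rightarrow$ Forgues.
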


\begin{con} [Farhadian, 2016]

$p_n^{\ \left[(\frac{p_{n+1}}{p_n})^n\right]} \leq n^{p_n}, \ \forall \ n \geq 5$.

\end{con}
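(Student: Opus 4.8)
The plan is to unwind the inequality with a single logarithm and then to recognise it as a quantitative sharpening of the Firoozbakht's conjecture itself. Write $r_n=\left(\dfrac{p_{n+1}}{p_n}\right)^n$ and recall that $[\,\cdot\,]$ denotes the integer part, so $[r_n]\le r_n$. Taking $\log_{p_n}$ of both sides, the claim $p_n^{[r_n]}\le n^{p_n}$ is implied by
$$ r_n \;\le\; \frac{p_n\ln n}{\ln p_n}. $$
Thus it suffices to bound $r_n$ from above; the integer part only helps, so I would discard it and aim for this cleaner inequality, in the spirit of the other consequences proved above.

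First I would record the direct consequence used throughout the paper: from $p_{n+1}<p_n^{(n+1)/n}$ one gets at once $r_n<p_n$. Since the target $\frac{p_n\ln n}{\ln p_n}$ is exactly $p_n$ times the factor $\frac{\ln n}{\ln p_n}<1$, the bound $r_n<p_n$ falls just short, and the whole difficulty is to recover the missing factor. To make this precise I would pass to logarithms and introduce the \emph{Firoozbakht slack}
$$ \delta_n \;=\; (n+1)\ln p_n-n\ln p_{n+1}\;=\;\ln p_n-n\ln\!\Big(1+\frac{g_n}{p_n}\Big), $$
which is positive precisely because the conjecture holds. A short computation shows that the desired inequality is \emph{equivalent} to the lower bound $\delta_n\ge \ln\ln p_n-\ln\ln n$.

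Next I would estimate $\delta_n$ from below. Using $\ln(1+x)<x$ gives $\delta_n>\ln p_n-\tfrac{n}{p_n}\,g_n$, and here I would feed in Theorem \ref{firoozbakhtlnquadrado}, namely $g_n<\ln^2 p_n-\ln p_n-1$ for $n\ge 10$, together with sharp two–sided Prime Number Theorem bounds for $n=\pi(p_n)$ in the style of \cite[Corollary 3.6]{A} (or Dusart's inequalities). This reduces the statement, after putting $x=p_n$, to an elementary inequality in the single variable $x$ that should hold for all $x$ beyond an explicit threshold; the finitely many remaining $n$ would then be checked with a computer, exactly as in the proofs above.

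The hard part will be the precision of the Prime Number Theorem input. Since $\frac{n}{p_n}$ multiplies a gap of size $\sim\ln^2 p_n$, an error of order $\ln^{-2}p_n$ in the estimate $\frac{n}{p_n}=\frac{1}{\ln p_n}\big(1+\frac{1}{\ln p_n}+\cdots\big)$ already produces an $O(1)$ error in $\tfrac{n}{p_n}g_n$, which is fatal because the slack we must certify, $\ln\ln p_n-\ln\ln n\sim \frac{\ln\ln p_n}{\ln p_n}$, tends to $0$. In other words the margin is genuinely thin: the cushion $-\ln p_n-1$ in Theorem \ref{firoozbakhtlnquadrado} is exactly what saves the inequality, so the crude bounds $\ln(1+x)<x$ and $p_n>n\ln n$ must be replaced by their second–order refinements, and the product of one–sided upper bounds for $\frac{n}{p_n}$ and for $g_n$ must be handled carefully to avoid overshooting. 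Controlling these lower–order terms — rather than any single clever step — is where the real work lies.
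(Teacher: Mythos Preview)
There is nothing to compare against: in the paper this statement is a \emph{conjecture}, not a theorem, and no proof is offered. The only thing the paper proves about it is Theorem~4.5, which shows the chain Farhadian $\Rightarrow$ Nicholson $\Rightarrow$ Firoozbakht $\Rightarrow$ Forgues. In particular, Farhadian's inequality is presented as the \emph{strongest} of the four, strictly upstream of Firoozbakht.

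Your plan tries to go in the opposite direction: you assume Firoozbakht (via Theorem~\ref{firoozbakhtlnquadrado}) and attempt to deduce Farhadian. After discarding the integer part you are aiming at $r_n\le \dfrac{p_n\ln n}{\ln p_n}$, which is precisely the inequality the paper obtains \emph{from} Farhadian as the first step of ``Farhadian $\Rightarrow$ Nicholson''. So you are effectively trying to reverse the paper's implication, and that cannot succeed using only Firoozbakht-level input unless you supply something genuinely new. Your own error analysis already exposes the gap: with $g_n<\ln^2 p_n-\ln p_n-1$ and PNT-type bounds on $n/p_n$, the product $\tfrac{n}{p_n}g_n$ carries an unavoidable $O(1)$ uncertainty, while the slack you must certify, $\ln\ln p_n-\ln\ln n\sim \tfrac{\ln\ln n}{\ln n}$, tends to~$0$. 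That is not a matter of sharpening constants; it means the inequality you are chasing is, at this level of input, out of reach --- which is exactly why the paper leaves Farhadian's statement as a conjecture.
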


To see how this conjectures are related to Firoozbakht's conjecture, we will remember the following inequalities:

\begin{thm}

$p_n > n\ln(n), \ \forall \ n \in \mathbb{N}$; and $$\ln(n) + \ln(\ln(n)) - 1 < \displaystyle \frac{p_n}{n} < \ln(n) + \ln(\ln(n)), \ \forall \ n \geq 6.$$

\end{thm}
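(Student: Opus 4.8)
The plan is to derive both assertions from explicit (effective) forms of the Prime Number Theorem, in the same spirit as the bound on $\pi(x)$ borrowed from \cite{A} in the proof of Theorem~\ref{firoozbakhtlnquadrado}. Neither inequality is elementary --- they are the classical estimates of Rosser and Dusart --- so rather than attempt an analytic proof from scratch I would reduce the statement to explicit two-sided bounds on the prime-counting function and then \emph{invert} them to pass from estimates on $\pi(x)$ to estimates on $p_n$.

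First I would treat the second (two-sided) inequality, because the first follows from it. The input I would quote is a pair of explicit bounds on $\pi(x)$ carried to enough terms, of the shape $\frac{x}{\ln x - 1 - \frac{1}{\ln x} - \frac{a}{\ln^2 x}} < \pi(x) < \frac{x}{\ln x - 1 - \frac{1}{\ln x} - \frac{b}{\ln^2 x}}$ valid past explicit thresholds (the literature cited above supplies admissible constants). Setting $x = p_n$ and using $\pi(p_n) = n$ converts the upper estimate for $\pi$ into a lower bound for $p_n$ and the lower estimate for $\pi$ into an upper bound for $p_n$. To unwind these transcendental relations I would bootstrap the crude asymptotic $p_n \sim n\ln n$ into the refined expansion $\ln p_n = \ln n + \ln\ln n + \frac{\ln\ln n}{\ln n} + o\!\left(\frac{1}{\ln n}\right)$, substitute it back, and read off that $p_n/n$ is squeezed between $\ln n + \ln\ln n - 1$ and $\ln n + \ln\ln n$. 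The sharp starting index $n \geq 6$ is then pinned down by a finite computer check, exactly as elsewhere in this paper.

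For the first statement, $p_n > n\ln n$ for every $n$, I would simply invoke the lower half of the two-sided inequality: as soon as $\ln\ln n > 1$, that is $n \geq 16$, one has $\ln n + \ln\ln n - 1 > \ln n$, whence $p_n > n(\ln n + \ln\ln n - 1) > n\ln n$. The finitely many remaining cases $1 \leq n \leq 15$ are checked directly by computer.

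The hard part will be the explicit bounds on $\pi(x)$ themselves, which I would take as given rather than reprove: they depend on genuinely deep analytic number theory (explicit zero-free regions for $\zeta$, large-scale numerical verification of the Riemann Hypothesis, and explicit estimates for Chebyshev's functions $\psi$ and $\theta$ transferred to $\pi$ by partial summation). Granting those, the real obstacle is the \emph{inversion}: the crude two-term bounds are not sufficient, and one must keep the $1/\ln^2 x$ refinement to see that the additive constants are exactly $-1$ below and $0$ above. It is precisely this marrying of the asymptotic expansion with sharp numerical thresholds --- rather than any single clever estimate --- that makes the determination of the sharp index $n \geq 6$ the delicate point.
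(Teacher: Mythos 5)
The first thing to say is that the paper itself offers no proof of this theorem: both halves are quoted as known results --- the first inequality is Rosser's theorem, the second is Dusart's --- and the paper's ``proof'' consists entirely of the citations to \cite{Rosser} and \cite{Dusart}. So your decision to take the deep analytic input (explicit bounds on $\pi(x)$) as a black box is consistent in spirit with the paper, and is also faithful to how such results are actually established. Your reduction of the first inequality to the second is correct: for $n \geq 16$ one has $\ln\ln n > 1$, hence $p_n > n(\ln n + \ln\ln n - 1) > n\ln n$, and the cases $1 \leq n \leq 15$ form a trivial finite check. The direction of your inversion is also right: an upper bound on $\pi$ yields a lower bound on $p_n$, and vice versa.

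However, your treatment of the two-sided inequality has a genuine gap: you cannot combine non-effective asymptotics with a finite computer check. Your bootstrap step replaces $\ln p_n$ by $\ln n + \ln\ln n + \frac{\ln\ln n}{\ln n} + o\!\left(\frac{1}{\ln n}\right)$; an $o(\cdot)$ term carries no information about \emph{where} an inequality begins to hold, so the concluding sentence ``the sharp starting index $n \geq 6$ is then pinned down by a finite computer check'' is not a valid step --- there is no explicit $N$ beyond which your squeeze is known to apply, hence no finite list of cases to check. The expansion is also wrong at the order you need: the correct third term is $\frac{\ln\ln n - 1}{\ln n}$, with error $O\!\left(\frac{(\ln\ln n)^2}{\ln^2 n}\right)$ rather than $o\!\left(\frac{1}{\ln n}\right)$, and this matters because the positive margin your lower bound exploits is itself only of size $\frac{\ln\ln n - 1}{\ln n} - \frac{1}{\ln p_n}$, i.e.\ comparable to the terms you are mishandling. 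To repair the argument every link must be kept explicit: for instance, first control $\ln p_n$ from both sides by a crude but fully effective bound of the shape $n\ln n < p_n < 2n\ln n$ (valid past a small explicit index), substitute that into the inverted $\pi(x)$ bounds, and track all constants and thresholds numerically down to a range a computer can finish. That effective bookkeeping is precisely the content of Dusart's paper; as written, your proposal defers not only the analytic input but essentially the entire substance of the theorem.
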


For a proof of the first inequality (Rosser's theorem), we refer the reader to \cite{Rosser}. For the second, to \cite{Dusart}.

\begin{thm}

Farhadian $\Rightarrow$ Nicholson $\Rightarrow$ Firoozbakht $\Rightarrow$ Forgues.

\end{thm}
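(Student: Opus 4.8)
The plan is to prove the three arrows separately, after first recording the convenient reformulation of Firoozbakht that drives everything. Throughout I read $\log$ as the natural logarithm, so that it matches the $\ln$ of the preceding theorems. Exactly as in the proof of Theorem \ref{firoozbakhtlnquadrado}, taking logarithms of $\sqrt[n]{p_n} > \sqrt[n+1]{p_{n+1}}$ and clearing denominators gives $(n+1)\ln p_n > n\ln p_{n+1}$, which rearranges to $$\left(\frac{p_{n+1}}{p_n}\right)^n < p_n.$$ Thus Firoozbakht is \emph{equivalent} to the assertion that $(p_{n+1}/p_n)^n < p_n$ for every $n$, and every implication will be obtained by comparing the quantity $(p_{n+1}/p_n)^n$ (or its logarithmic analogue) against $p_n$, $n\log n$ and $e$, using Rosser's lower bound $p_n > n\ln n$ and the Dusart bounds quoted just before the statement.

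Two of the three arrows are then immediate. For Nicholson $\Rightarrow$ Firoozbakht, assume $(p_{n+1}/p_n)^n < n\log n$ for $n\geq 5$; Rosser's theorem gives $n\log n < p_n$, so $(p_{n+1}/p_n)^n < p_n$, which is the reformulated Firoozbakht for $n\geq 5$, and the cases $n\leq 4$ are read off the table in Section 1. For Firoozbakht $\Rightarrow$ Forgues, note that $\frac{\ln p_n}{n} > \frac{\ln p_{n+1}}{n+1}$ is the same as $\frac{\log p_{n+1}}{\log p_n} < 1+\frac1n$; raising to the $n$-th power and using the elementary inequality $\left(1+\frac1n\right)^n < e$ yields $\left(\frac{\log p_{n+1}}{\log p_n}\right)^n < e$ for all $n$.

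The real work is in Farhadian $\Rightarrow$ Nicholson. First I would take logarithms of $p_n^{(p_{n+1}/p_n)^n}\leq n^{p_n}$ and divide by $\log p_n>0$, obtaining $$\left(\frac{p_{n+1}}{p_n}\right)^n \leq \frac{p_n}{\log p_n}\,\log n,$$ so that it suffices to establish the auxiliary inequality $\frac{p_n}{\log p_n} < n$, i.e.\ $p_n < n\log p_n$. Here I would combine the two estimates of the preceding theorem: Rosser's $p_n > n\ln n$ gives $\log p_n > \log n + \log\log n$, while Dusart's upper bound gives $p_n < n(\log n + \log\log n)$ for $n\geq 6$, and multiplying the first by $n$ and comparing yields $p_n < n(\log n+\log\log n) < n\log p_n$ for $n\geq 6$. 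Substituting back produces $(p_{n+1}/p_n)^n < n\log n$, which is Nicholson for $n\geq 6$, leaving only $n=5$ to be checked by hand. The hard part will be precisely this auxiliary inequality $p_n < n\log p_n$, since it is what forces one to invoke a lower bound (Rosser) and an upper bound (Dusart) on $p_n$ simultaneously and to handle the small range where Dusart's estimate is not yet valid; and if the bracket in Farhadian's conjecture denotes the integer part rather than mere grouping, one needs instead the slightly stronger $\frac{p_n}{\log p_n}\log n + 1 \leq n\log n$, which follows from the same two estimates with a little more room.
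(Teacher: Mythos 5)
Your proof is correct and follows essentially the same route as the paper: the same reformulation of Firoozbakht as $\left(\frac{p_{n+1}}{p_n}\right)^n < p_n$, the same combination of Dusart's upper bound with Rosser's lower bound to get $\frac{p_n}{\ln p_n} < n$ in the Farhadian $\Rightarrow$ Nicholson step, and the same $\left(1+\frac{1}{n}\right)^n < e$ argument for Forgues. If anything you are slightly more careful than the paper, which passes silently over the small cases ($n \leq 4$, resp.\ $n=5$) and over the possible integer-part reading of the bracket in Farhadian's conjecture.
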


\begin{proof} Suppose Farhadian's conjecture is true. Then taking logarithms, $$\left(\displaystyle \frac{p_{n+1}}{p_n}\right)^n \leq \displaystyle \frac{p_n\ln(n)}{\ln(p_n)}.$$ By the last theorem, $$\displaystyle \frac{p_n}{n} < \ln(n) + \ln(\ln(n)) = \ln(n\ln(n)) < \ln(p_n), \ \forall \ n \geq 6,$$ which implies Nicholson's conjecture. By Rosser's theorem, $$\left(\displaystyle \frac{p_{n + 1}}{p_n}\right)^n < p_n, \ \forall \ n \in \mathbb{N},$$ which is equivalent to $$p_{n+1}^n < p_n^{n+1}, \ \forall \ n \in \mathbb{N},$$ which is equivalent to Firoozbakht's conjecture. Now, taking logarithms on the last inequality, we obtain $$\displaystyle \frac{\log(p_{n + 1})}{\log(p_n)} < 1 + \displaystyle \frac{1}{n}.$$ Raising to the $n$-power, we get $$\left(\displaystyle \frac{\log(p_{n + 1})}{\log(p_n)}\right)^n < \left(1 + \displaystyle \frac{1}{n}\right)^n < e, \ \forall \ n \in \mathbb{N},$$ which is the Forgues's conjecture. \end{proof}

\section{A consequence of the Zhang's theorem}

Zhang's theorem is the following celebrated statement.

\begin{thm} [Zhang]

$\liminf g_n < \infty$.

\end{thm}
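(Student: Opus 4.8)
The plan is to establish a finite bound $B$ with $g_n \le B$ for infinitely many $n$, which is precisely the assertion $\liminf g_n < \infty$. I would follow the sieve-theoretic approach of Goldston, Pintz and Yıldırım, powered by the distributional breakthrough of Zhang. First I would fix an \emph{admissible} $k$-tuple $\mathcal{H} = \{h_1, \dots, h_k\}$ of distinct nonnegative integers, meaning that for every prime $p$ the residues $h_1, \dots, h_k$ fail to cover all classes modulo $p$; admissibility removes any local obstruction to the translates $n + \mathcal{H}$ containing several primes. The problem then reduces to showing that, for some such tuple, at least two of $n + h_1, \dots, n + h_k$ are simultaneously prime for infinitely many $n$. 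Indeed, if this holds then $g_m \le h_k - h_1$ for infinitely many $m$, and the theorem follows at once.

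Second, I would introduce the GPY weighted sum over a dyadic window $N < n \le 2N$, namely
$$S(N) = \sum_{N < n \le 2N} \left( \sum_{i=1}^{k} \vartheta(n + h_i) - \log(3N) \right) \nu(n),$$
where $\vartheta(m) = \log m$ if $m$ is prime and $\vartheta(m) = 0$ otherwise, and $\nu(n) = \left( \sum_{d \mid P(n)} \lambda_d \right)^2 \ge 0$ is a Selberg sieve weight attached to $P(n) = \prod_{i=1}^{k} (n + h_i)$ and tuned to concentrate on those $n$ for which $P(n)$ has few prime factors. Since a single prime $n + h_i \le 3N$ contributes at most $\log(3N)$, the bracket can be positive only when at least two of the $n + h_i$ are prime. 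Consequently, if the sieve parameters can be chosen so that $S(N) > 0$ for arbitrarily large $N$, each such $N$ furnishes an $n$ with two primes in $n + \mathcal{H}$, delivering the desired bounded gap.

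Third, and this is where the genuine difficulty lies, evaluating $S(N)$ demands sharp asymptotics for sums of $\vartheta(n + h_i)$ over $n$ lying in arithmetic progressions, averaged over moduli $d$ up to $N^{\theta}$. The classical Bombieri--Vinogradov theorem controls such averages only up to level of distribution $\theta = 1/2$, which falls just short of forcing $S(N) > 0$ and yields only that $g_n$ is small relative to $\log p_n$, not bounded. The heart of Zhang's contribution, which I would invoke wholesale, is an equidistribution estimate beyond the Bombieri--Vinogradov barrier: the primes are distributed to level $\theta = 1/2 + 2\varpi$ for an explicit $\varpi > 0$, once the moduli are restricted to \emph{smooth} (friable) integers. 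Proving this estimate is the main obstacle; it rests on the dispersion method combined with deep cancellation in incomplete exponential and Kloosterman-type sums, ultimately drawing on Deligne's resolution of the Riemann Hypothesis over finite fields.

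Finally, feeding Zhang's level-of-distribution estimate into the GPY apparatus, I would optimize the coefficients $\lambda_d$ and the tuple length $k$. For $k$ sufficiently large the positive main term dominates the error, giving $S(N) > 0$ for all large $N$ and hence infinitely many bounded gaps. Concretely, Zhang's admissible tuple with $k \approx 3.5 \times 10^6$ produces the explicit bound $\liminf g_n \le 7 \times 10^7 < \infty$, which completes the proof.
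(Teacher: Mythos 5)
The paper never proves this statement at all: it simply refers the reader to Zhang's article, so there is no internal proof to compare yours against. With that understood, your proposal is a faithful outline of the argument in the cited source: the Goldston--Pintz--Y{\i}ld{\i}r{\i}m weighted-sieve framework; the observation that positivity of the sum $S(N)$ forces at least two primes among $n+h_1,\dots,n+h_k$ and hence a prime gap at most $h_k - h_1$ infinitely often; the insufficiency of Bombieri--Vinogradov at level $\theta = 1/2$; and Zhang's equidistribution estimate at level $\frac{1}{2} + 2\varpi$ for smooth moduli as the decisive new input. The numerical data you quote ($k \approx 3.5 \times 10^6$, bound $7 \times 10^7$) are indeed those of Zhang's paper. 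The caveat to record is that what you have written is a roadmap rather than a proof: the step you invoke ``wholesale''---the dispersion-method equidistribution theorem resting on bounds of Weil/Deligne type---is the entire mathematical content of Zhang's work, and the GPY optimization (the choice of the $\lambda_d$ and the verification that the main term dominates for your $k$) is likewise asserted rather than carried out; with only Bombieri--Vinogradov in hand, the same apparatus provably fails to give bounded gaps, so nothing short of that estimate closes the argument. Since the paper itself settles for a citation, your level of detail is consistent with the paper's own treatment, but a referee should be aware that your sketch cannot be checked without reading Zhang's paper in any case.
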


For a proof of the theorem we refer the reader to \cite{Zhang}. One of the consequences of the Zhang's theorem is that the Firoozbakht's conjecture is true for infinitely many values of $n$.

\begin{thm}

There are infinitely many $n \in \mathbb{N}$ such that $\sqrt[n]{p_n} > \sqrt[n+1]{p_{n+1}}$.

\end{thm}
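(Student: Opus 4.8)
The plan is to translate the defining inequality $\sqrt[n]{p_n} > \sqrt[n+1]{p_{n+1}}$ into a bound on the gap $g_n$, exactly in the spirit of the implication theorem proved above, and then to feed in Zhang's theorem in order to produce infinitely many indices at which that gap bound holds.

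First I would record the equivalence already exploited earlier in the paper: raising to the power $n(n+1)$ shows that $\sqrt[n]{p_n} > \sqrt[n+1]{p_{n+1}}$ holds if and only if $p_{n+1}^n < p_n^{n+1}$, and taking logarithms this is in turn equivalent to
$$n \ln\!\left(\frac{p_{n+1}}{p_n}\right) < \ln(p_n).$$
Since $\ln(1 + x) < x$ for every $x > 0$, and $\frac{p_{n+1}}{p_n} = 1 + \frac{g_n}{p_n}$, the left-hand side is strictly smaller than $\frac{n\,g_n}{p_n}$. Hence it suffices to exhibit infinitely many $n$ satisfying the stronger inequality
$$g_n < \frac{p_n \ln(p_n)}{n}.$$

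Next I would observe that the right-hand side above tends to infinity. Indeed, by Rosser's theorem $p_n > n\ln(n)$, so $\frac{p_n \ln(p_n)}{n} > \ln(n)\,\ln(n\ln(n)) \to \infty$ as $n \to \infty$. On the other hand, Zhang's theorem furnishes a finite constant $C := \liminf g_n < \infty$, and since $\{g_n\}$ is a sequence of positive integers with finite $\liminf$, there are infinitely many $n$ with $g_n \le C$. For all $n$ large enough we have $\frac{p_n \ln(p_n)}{n} > C$; intersecting this cofinite set of indices with the infinite set of indices where $g_n \le C$ still leaves infinitely many $n$ for which $g_n \le C < \frac{p_n \ln(p_n)}{n}$. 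Each such $n$ therefore satisfies the Firoozbakht inequality, which proves the claim.

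The main, and essentially the only, obstacle is Zhang's theorem itself, which supplies the indispensable input that $g_n$ remains bounded along a subsequence; everything else is an elementary reduction through the logarithm estimate combined with the growth of $p_n$. The one point that deserves a word of care is the passage from $\liminf g_n < \infty$ to ``$g_n \le C$ for infinitely many $n$'': for an integer-valued sequence a finite $\liminf$ is attained infinitely often, so this step is immediate and requires no further quantitative input.
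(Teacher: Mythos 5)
Your proof is correct, and it rests on the same two pillars as the paper's proof: Zhang's theorem supplies infinitely many indices with bounded gap, and Rosser's theorem supplies the growth that makes the Firoozbakht inequality hold at those indices. The execution, however, is genuinely different, and arguably cleaner. The paper sets $z = \liminf g_n$, takes the infinite set $Z = \{n : g_n = z\}$ where the liminf is attained \emph{exactly}, and computes the limit of the ratio $p_n^{n+1}/p_{n+1}^n$ along $Z$, writing it as $\bigl[(1+z/p_n)^{-p_n/z}\bigr]^{zn/p_n}\cdot p_n$ and using $(1+1/x)^{-x} \to 1/e$ together with $zn/p_n \to 0$ to conclude that the ratio tends to infinity on $Z$. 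You instead linearize: the inequality $\ln(1+x) < x$ converts the Firoozbakht inequality into the sufficient gap bound $g_n < p_n\ln(p_n)/n$, after which the argument is pure bookkeeping (the right side tends to infinity by Rosser, and Zhang gives $g_n \le C$ infinitely often). Your route avoids the limit computation entirely and needs only $g_n \le C$ infinitely often rather than exact attainment of the liminf, which also forces you to address explicitly a point the paper silently assumes when it asserts ``$Z$ is infinite'': that a finite $\liminf$ of an integer sequence is attained (or at least not exceeded) infinitely often. What the paper's computation buys in exchange is a quantitatively stronger conclusion, namely that $p_n^{n+1}/p_{n+1}^n \to \infty$ along the subsequence, rather than merely being eventually greater than $1$.
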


\begin{proof} (Following \cite{Luan}) Let $$z = \liminf p_{n+1} - p_n$$ and $$Z = \{n \in \mathbb{N}; \ p_{n+1} - p_n = z\}.$$ Then $Z$ is infinite and $$\displaystyle \frac{\fantasma{p}{n}{n+1}}{\fantasma{p}{n+1}{n}} = \displaystyle \frac{\fantasma{p}{n}{n+1}}{(p_n + z)^n} = \left(\displaystyle \frac{p_n}{p_n + z}\right)^n \cdot p_n = \left[\left(\displaystyle \frac{1}{1+\frac{z}{p_n}}\right)^\frac{p_n}{z}\right]^\frac{zn}{p_n} \cdot p_n, \ \forall \ n \in Z.$$ As $$\displaystyle \lim_{x \rightarrow \infty} \left(\displaystyle \frac{1}{1+\frac{1}{x}}\right)^x = \displaystyle \frac{1}{e}$$ and $$\displaystyle \lim_{n \rightarrow \infty} \displaystyle \frac{zn}{p_n} = 0$$ (by Rosser's theorem), then $$\left[\left(\displaystyle \frac{1}{1+\frac{z}{p_n}}\right)^\frac{p_n}{z}\right]^\frac{zn}{p_n} \rightarrow 1 \text{ if } n \rightarrow \infty, \ n \in Z.$$ So $$\displaystyle \lim_{{n \rightarrow \infty} \atop {n \in Z}} \displaystyle \frac{\fantasma{p}{n}{n+1}}{\fantasma{p}{n+1}{n}} = \infty.$$ In particular, $\fantasma{p}{n}{n+1} > \fantasma{p}{n+1}{n}$, for all sufficiently large $n \in Z$. This is equivalent to the statement of the theorem. \end{proof}

\section{Final Remarks}

It is worth commenting on the current state of the relationship of the Firoozbakht's conjecture with two other rather famous problems in number theory: the Riemann hypothesis and the Cramér's conjecture.

%Cramér showed that the Riemann hypothesis implies $g_n \ll \sqrt{p_n}\ln(p_n)$. For a proof of this result we refer the reader to \cite{Cramer1} or \cite{Cramer2}. That is, even the Riemann hypothesis, one of today's most celebrated problems, implies that the function $g_n$ has an order of growth much larger than the order of growth predicted by the Firoozbakht's conjecture, which is $\ln^2(p_n)$. However, the authors do not know if the Riemann hypothesis implies that the function $g_n$ grows more slowly than $\sqrt{p_n}\ln(p_n)$.

Cramér showed that the Riemann hypothesis implies $g_n \ll \sqrt{p_n}\ln(p_n)$. For a proof of this result we refer the reader to \cite{Cramer1} or \cite{Cramer2}. On the other hand, Firoozbakht's conjecture implies that the function $g_n$ has an order of growth  $\ln^2(p_n)$, which is much smaller than the previous result, derived from one of today's most celebrated mathematical problems. However, the authors do not know if the Riemann hypothesis implies that the function $g_n$ grows strictly slower than $\sqrt{p_n}\ln(p_n)$.

The statement of the Cramér's conjecture is $g_n \ll \ln^2(p_n)$. It is clear that Firoozbakht's conjecture implies Cramér's conjecture, according to theorem \ref{firoozbakhtlnquadrado}. Cramér presented his conjecture based on a probabilistic model of prime numbers. In this model, Cramér showed that the relation $$\displaystyle \limsup_{n \rightarrow \infty} \displaystyle \frac{g_n}{\ln^2(p_n)} = 1$$ is true with probability $1$ (see \cite{Cramer2}). However, as pointed out by A. Granville in \cite{Granville}, Maier's theorem shows that the Cramér's model does not adequately describe the distribution of primes numbers on short intervals, and a refinement of the Cramér's model taking into account divisibility by small primes suggests that $$\displaystyle \limsup_{n \rightarrow \infty} \displaystyle \frac{g_n}{\ln^2(p_n)} \geq 2e^{-\gamma} \approx 1,1229...,$$ where $$\gamma = \lim_{n \rightarrow \infty} \left(-\ln(n) + \displaystyle \sum_{k=1}^{n} \displaystyle \frac{1}{k}\right) \approx 0,5772...$$ is the Euler-Mascheroni constant. More specifically, if $\pi(x)$ is the prime-counting function, then the Cramér's model predicts that $$\displaystyle \lim_{n \rightarrow \infty} \displaystyle \frac{\pi(x+(\ln(x))^\lambda) - \pi(x)}{(\ln(x))^{\lambda - 1}} = 1, \ \forall \ \lambda \geq 2,$$ while Maier's theorem states that if $\lambda > 1$, then $$\liminf_{n \rightarrow \infty} \displaystyle \frac{\pi(x+(\ln(x))^\lambda) - \pi(x)}{(\ln(x))^{\lambda - 1}} < 1$$ and $$\limsup_{n \rightarrow \infty} \displaystyle \frac{\pi(x+(\ln(x))^\lambda) - \pi(x)}{(\ln(x))^{\lambda - 1}} > 1.$$

For a proof of Maier's theorem we refer the reader to \cite{Maier}. All this shows how important is a depth study of the Firoozbakht's conjecture, or, more generally, of the function $g_n$.

\section*{Acknowledgments}

%The first author thanks to CAPES by the financial support, to his advisor Hugo Luiz Mariano for all the support given during the last four years, to professor Paolo Piccione for having helped with the \textit{Proceedings}, and to IME - USP. Both authors thanks to professor Marcos Martins Alexandrino da Silva for the valuable suggestions.

The first author would like to thank CAPES by the financial support, to his advisor Hugo Luiz Mariano for all the support given during the last four years, to professor Paolo Piccione and to IME - USP. Both authors are grateful to  professor Marcos Martins Alexandrino da Silva for his valuable suggestions.

\end{document}